\newtheorem{theorem}{Theorem}
\newtheorem{lemma}[theorem]{Lemma}
\newtheorem{proposition}[theorem]{Proposition}
\newtheorem{definition}{Definition}
\newtheorem{remark}{Remark}
\newcommand{\R}{\mathbb{R}}
\newcommand{\Jop}{\overline{J}^{2,+}}
\def\Om{\Omega}
\def\vep{\varepsilon}
\def\ol{\overline}
\def\R{{\mathbb R}}
\def\X{{\mathbf X}}
\def\Y{{\mathbf Y}}
\def\L{{\mathcal L}}
\def\S{{\mathcal S}}
\def\O{{\mathcal O}}
\def\U{{\mathcal U}}
\def\V{{\mathcal V}}
\def\W{{\mathcal W}}
\DeclareMathOperator{\tr}{tr}
\title[Superposition in disjoint variables]{Superposition property in disjoint variables for the infinity Laplace equation}
\author[Q. Liu]{Qing Liu}
\address[Qing Liu]{Geometric Partial Differential Equations Unit, Okinawa Institute of Science and Technology Graduate University, Okinawa 904-0495, Japan, {\tt qing.liu@oist.jp}}
\author[J. J. Manfredi]{Juan J. Manfredi}
\address[Juan J. Manfredi]{Department of Mathematics, University of Pittsburgh, 312 Thackeray Hall, Pittsburgh, PA 15260, USA. {\tt manfredi@pitt.edu}}
\author[X. Zhou]{Xiaodan Zhou}
\address[Xiaodan Zhou]{Analysis on Metric Spaces Unit, Okinawa Institute of Science and Technology Graduate University, Okinawa 904-0495, Japan, {\tt xiaodan.zhou@oist.jp}}
\date{\today}
\begin{document}

\begin{abstract}
We establish a superposition principle in disjoint variables for the inhomogeneous infinity Laplace equation. We show that the sum of viscosity solutions of the inhomogeneous infinity Laplace equation in separate domains is a viscosity solution in the product domain. This result has been used in the literature with certain particular choices of solutions to simplify regularity analysis for a general inhomogeneous infinity Laplace equation by reducing it to the case without sign-changing inhomogeneous terms and vanishing gradient singularities. We present a proof of this superposition principle for general viscosity solutions. We also explore generalization in metric spaces using cone comparison techniques and study related properties for general elliptic and convex equations. 
\end{abstract}

\subjclass[2020]{35D40, 35J92}
\keywords{Superposition principle in disjoint variables, infinity Laplace equation, viscosity solutions}

\maketitle

\section{Introduction}
It is well known that the sum of two harmonic functions is harmonic. We say that the solutions to an equation have the superposition property when the sum of two solutions remains a solution to the equation. This property holds for linear equations, but in general fails for the nonlinear equations. For example, consider the infinity Laplace equation 
\begin{equation}\label{inftyL}
    -\Delta_\infty u=-\langle\nabla^2 u\nabla u, \nabla u\rangle=0
\end{equation}
in the domain $\Omega=\{(x_1, x_2)\in \mathbb{R}^2: x_1>0, x_2>0\}$. We can easily verify that the sum of two smooth infinity harmonic functions $u(x_1, x_2)=x_1^{\frac{4}{3}}-x_2^{\frac{4}{3}}$ and $v(x_1, x_2)=-x_1$ is not a solution to the infinity Laplace equation. However, we observe that the disjoint sum 
\[
w(x_1, x_2, x_3, x_4)=u(x_1, x_2)+v(x_3, x_4) =x_1^{\frac{4}{3}}-x_2^{\frac{4}{3}}-x_3
\]
is a solution of \eqref{inftyL} in $\Omega\times \Omega\subset \mathbb{R}^4$. This observation can be verified for viscosity solutions of the general inhomogeneous infinity Laplace equation.
\begin{equation}\label{inf-laplace}
    -\Delta_\infty u=f \quad \text{in $\Omega$,}
\end{equation}
where $\Omega$ is a domain in Euclidean space and $f\in C(\Omega)$ is a given function. When $u_i$ are viscosity solutions to \eqref{inf-laplace}, respectively, for $f=f_i$ and $\Omega=\Omega_i$ with $i=1, 2$, we will show that $u(x, y)=u_1(x)+u_2(y)$ is a viscosity solution to \eqref{inf-laplace} with $\Omega=\Omega_1\times \Omega_2$ and $f(x,y)=f_1(x)+f_2(y)
$. See Theorem~\ref{infsuper} for a precise statement of this superposition result.

Such a superposition in disjoint variables plays an important role in understanding properties of the infinity Laplace equation. A notable application appears in the paper \cite{Lin14} by Lindgren, 
where the regularity of solutions to \eqref{inf-laplace} is studied for a bounded domain $\Omega\subset \R^d$ and a bounded, possibly sign changing inhomogeneous term $f\in C(\Omega)$. For a given viscosity solution $u$,  the superposition property enables us to show that 
\[
w(x, y_1, y_2)=u(x)+a y_1+by_2^{\frac{4}{3}}, \quad x\in \Omega,\ y_1, y_2\in \R,
\]
with $a, b>0$ appropriately chosen, is a viscosity solution of 
\[
-\Delta_\infty w = g \quad \text{in $\Omega':=\Omega\times \R^2$,}
\]
\par\noindent  where the new inhomogeneous term $g$ satisfies $\inf_{\Omega'} g>0$. Moreover, the new equation is easier to handle, as the construction of $w$ also generates a positive lower bound for $|\nabla w|$. 
This reduction significantly simplifies the regularity problem by transforming the general equation into the more manageable case with a positive inhomogeneous term and no singularities at vanishing gradients. Similar techniques are applied to study the regularity of solutions to the inhomogeneous infinity Laplace equation associated with a frame of vector fields \cite{FM}. 


Although the superposition property in disjoint variables for
the infinity Laplacian has been mentioned in \cite{LindBook} and applied in \cite{Lin14} for certain specific solutions; a rigorous proof has not been provided yet for general viscosity solutions. Hong and Feng \cite{HF} give a proof of this property in the case when either of the solutions in the sum is smooth. 

The main purpose of this paper is therefore to present a rigorous proof for the superposition of two viscosity solutions of the infinity Laplace equation.  Our proof is based on the Theorem on Sums \cite[Theorem 3.2]{CIL}. We will also use our approach to study related problems for the Laplace equation and other second-order elliptic equations. 
It is possible to further consider a general elliptic operator in the form of 
\[
\L[u]=L(x, \nabla u, \nabla^2 u), 
\]
where $L: \R^d \times \R^d\times \S^d\to \R$ is a continuous function satisfying
\begin{equation}\label{ellip}
L(x,  \xi, X)\leq L(x, \xi, Y), \quad \text{for all $x\in \mathbb{R}^d$, $\xi\in \mathbb{R}^d, X, Y\in \S^d$ with $X\geq Y$. }
\end{equation}
Here, $\S^d$ denotes the set of all $d\times d$ symmetric matrices. Such an elliptic general operator is not the main focus of this paper. We refer the reader to our forthcoming paper \cite{LMZ24} for discussion on this topic for general quasilinear elliptic equations and related applications. 

In Section \ref{convex}, we adapt our arguments to study the standard superposition property (in common variables instead of disjoint ones) for viscosity subsolutions of certain nonlinear elliptic equations. Our result only applies to viscosity subsolutions under the sub-additivity assumption on the elliptic operator $L$. 

In \cite[Theorem 5.8]{CaCr}, Caffarelli and Cabr'e prove a superposition-like result for viscosity subsolutions to convex elliptic equations. 
\[
L(\nabla^2 u)=0.
\]
Under the convexity of $L$, they show that $(u+v)/2$ is a viscosity subsolution if both $u$ and $v$ are viscosity subsolutions. This result is also related to ours. We use our approach based on the Theorem on Sums to provide a proof of their result in the setting of more general convex elliptic equations 
\[
L(\nabla u, \nabla^2 u)=0. 
\]

Section \ref{metric} is an attempt to extend the superposition principle in disjoint variables to infinity harmonic functions in general metric spaces. 
It is well known that infinite harmonic functions in the Euclidean space can be characterized by comparison with cones \cite{ArCrJu}, and this equivalent definition can be generalized to metric spaces \cite{Ju, JuSh}. It turns out that by using the cone comparison characterization, one can generalize the superposition principle in disjoint variables in a product metric space if the $\ell^1$ metric for the product space is chosen. However, it is not clear whether comparison with cones is sufficient to yield the same result for other product metrics including the $\ell^2$ metric as in the Euclidean space. 
 
\subsection*{Acknowledgments}
We express our thanks to the anonymous referee for a careful reading and thoughtful suggestions that have improved the readability of this manuscript.

The work of QL was supported by the JSPS Grant-in-Aid for Scientific Research (No.~22K03396). The work of JM was supported by the Simmons Collaboration Award 962828. The work of XZ was supported by the JSPS Grant-in-Aid for Early-Career Scientists (No.~22K13947) and by the Research Institute for Mathematical Sciences, an International Joint Use/ Research Center located at Kyoto University. 


\section{Preliminaries}
In this section, we review some preliminaries, including the definition of viscosity solutions and the statement of the Theorem on Sums.

Let $\Omega\subset \R^d$ be a domain.
Let $L: \R^d \times \R^d\times \S^d\to \R$ be a continuous function satisfying \eqref{ellip}. 
In the following, we give the definition of viscosity solutions of the equation 
\begin{equation}\label{general eq}
    L(x, \nabla u, \nabla^2 u)=0 \quad \text{in $\Omega$}. 
\end{equation}
We denote by $USC(\Omega)$ the class of upper semi-continuous functions in $\Omega$ and by $LSC(\Omega)$ the class of lower semi-continuous functions in $\Omega$.
\begin{definition}[Viscosity solutions]\label{def-vis}
A function $u\in USC(\Om)$ is called a \emph{viscosity subsolution} of \eqref{general eq} if whenever there exist $x_0\in \Om$ and $\psi\in C^2(\Om)$ such that $u-\psi$ attains a local maximum (resp., local minimum) at $x_0$, we have 
\[
L(x_0, \nabla \psi(x_0), \nabla^2 \psi(x_0))\leq 0.
\]
A function $u\in LSC(\Om)$ is called a \emph{viscosity supersolution} of \eqref{general eq} if whenever there exist $x_0\in \Om$ and $\psi\in C^2(\Om)$ such that $u-\psi$ attains a local minimum at $x_0$, we have 
\[
L(x_0, \nabla \psi(x_0), \nabla^2 \psi(x_0))\geq 0.
\]
A function $u\in C(\Om)$ 
is called a \emph{viscosity solution} of \eqref{general eq} if $u$ is both a viscosity subsolution and a viscosity supersolution of \eqref{general eq}. 
\end{definition}

For $f\in C(\Omega)$, we obtain the definition of viscosity solutions to the inhomogeneous infinity Laplace equation \eqref{inf-laplace} taking in definition \ref{def-vis}
\[
L(x, \xi, X)=-\left\langle X \xi, \xi\right\rangle-f(x), \quad x\in \Omega,\  \xi \in \R^d,\ X\in \S^d. 
\]

As a standard remark in viscosity solution theory, one can alternatively use the so-called semijets to define viscosity solutions. Recall that for $u\in USC(\Om)$ and $x\in \Om$, we define 
\[
\begin{aligned}
J^{2, +}u(x)&=\bigg\{(\xi, X)\in \R^{d}\times \S^d: 
u(y)\le u(x) +\langle \xi, y-x\rangle\\
&+ \frac{1}{2}\langle X(y-x), y-x\rangle+o(|y-x|^2)\text{ as $y\in \Om$ and $y\to x$}\bigg\}.
\end{aligned}
\]
The semijet $J^{2, -}u(x)$ for $u\in LSC(\Om)$ is defined as $-J^{2, +}(-u)(x)$. 

It is known that, when $L: \Om\times \R^d\times \S^d\to \R$ is continuous, a locally bounded $u\in USC(\Om)$ (resp., $u\in LSC(\Om)$) is a viscosity subsolution (resp., supersolution) of \eqref{general eq} if and only if at any $x_0\in \Om$,
\[
\begin{aligned}
&L(x_0, \xi, X)\leq 0\quad \text{for all $(\xi, X)\in J^{2, +}u(x_0)$}\\
&\big(\text{resp.,} \ L(x_0, \xi, X)\geq 0\quad \text{for all $(\xi, X)\in J^{2, -}u(x_0)$}
\big);
\end{aligned}
\]
see, for example, \cite[Remark 2.3, Remark 8.1]{CIL} for discussions on this equivalence. 

Moreover, under the continuity assumption of $L$, the semijets $J^{2, \pm}u(x_0)$ in the equivalent definition above can be replaced by their closures $\overline{J}^{2, \pm}u(x_0)$, which are defined as
\[
\begin{aligned}
    \overline{J}^{2, \pm}u(x)= \bigg\{(\xi, X)\in & \ \R^{n}\times \S^n: \exists (x_j, \xi_j, X_j)\in \Om\times  \R^{n}\times \S^n\\
    & \text{ such that } (\xi_j, X_j)\in J^{2, \pm} u(x_j) \text{ and }\\
    & (x_j, u(x_j), \xi_j, X_j)\to (x, u(x), \xi, X)\text{ as $j\to \infty$}\bigg\}. 
\end{aligned}
\]
We refer to \cite{CIL} for more details concerning the definition of viscosity solutions involving semijets. 
    
The main ingredient in proving the superposition property is the Theorem on sums (or Crandall-Ishii-Jensen lemma) given below \cite[Theorem 3.2]{CIL}. 

\begin{lemma}[Theorem on sums]\label{TOS}
Let $\U\subset \R^n$ and $\V\subset \R^m$ be two domains. Let $u:\U\to \R$ and $v:\V \to \R$ be upper semicontinuous, and $\varphi\in C^2(\U\times \V)$. Assume that \[ (x, y)\mapsto u(x)+v(y)-\varphi(x,y) \] reaches a maximum at $(x_0,y_0)\in \U\times \V$. For any $\varepsilon>0$, there exist $(\xi, X)\in \overline{J}^{2,+}u(x_0)$ and $(\eta, Y)\in \overline{J}^{2,+}v(y_0)$ such that
\[
\xi=\nabla_x \varphi(x_0,y_0),\quad\eta=\nabla_y \varphi(x_0,y_0),    
\]
and
\[
-\left(\frac{1}{\varepsilon}+\|Z\|\right)\begin{pmatrix}
I_n & 0 \\
0 & I_m 
\end{pmatrix}\le \begin{pmatrix}
X & 0 \\
0 & Y
\end{pmatrix}\le Z+\varepsilon Z^2,
\]
where $Z=\nabla^2 \varphi(x_0, y_0)\in \S^{n+m}$. 
\end{lemma}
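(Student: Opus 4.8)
The plan is to prove the statement by the classical sup-convolution/Alexandrov/Jensen scheme, organizing everything around the single function $w(z):=u(x)+v(y)$ on $\U\times\V\subset\R^{N}$, where $N=n+m$, $z=(x,y)$, $z_0=(x_0,y_0)$; the only special feature to exploit is that the ``sum'' structure forces every Hessian in sight to be block diagonal.

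\textbf{Reductions.} I would first normalize so that $(w-\varphi)(z_0)=0$, and replace $\varphi(z)$ by $\varphi(z)+|z-z_0|^4$ to make the maximum strict, which changes neither $\nabla\varphi(z_0)$ nor $Z=\nabla^2\varphi(z_0)$. Since $\varphi\in C^2$, on a small neighborhood $\nabla^2\varphi(z)=Z+o(1)$ as $z\to z_0$, so up to harmless error terms one may treat $\varphi$ as the quadratic with gradient $\nabla\varphi(z_0)$ and Hessian $Z$. The identities $\xi=\nabla_x\varphi(z_0)$, $\eta=\nabla_y\varphi(z_0)$ will fall out automatically from first-order matching at the contact point, so the real work is to produce the Hessians $X,Y$.

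\textbf{Regularization preserving blocks.} Sup-convolve $u$ and $v$ separately,
\[
u^{\varepsilon}(x)=\sup_{x'}\Big(u(x')-\tfrac{1}{2\varepsilon}|x-x'|^2\Big),\qquad v^{\varepsilon}(y)=\sup_{y'}\Big(v(y')-\tfrac{1}{2\varepsilon}|y-y'|^2\Big),
\]
and set $w^{\varepsilon}(z)=u^{\varepsilon}(x)+v^{\varepsilon}(y)$. Because the kernels add, $w^{\varepsilon}$ is exactly the sup-convolution of $w$ on $\R^{N}$; it is semiconvex (that is, $w^{\varepsilon}+\tfrac{1}{2\varepsilon}|\cdot|^2$ is convex), decreases to $w$, and wherever it is twice differentiable its Hessian is the block-diagonal matrix $\mathrm{diag}(\nabla^2u^{\varepsilon},\nabla^2v^{\varepsilon})$. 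The maximizers of $w^{\varepsilon}-\varphi$ converge to $z_0$.

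\textbf{Good contact point, Hessians, and transfer.} Near the strict maximizer, $w^{\varepsilon}-\varphi$ is semiconvex with a strict maximum, so Jensen's lemma furnishes a positive-measure set of points at which $w^{\varepsilon}$ is touched from above by a quadratic with arbitrarily small extra slope; intersecting with the full-measure set of twice-differentiability (Alexandrov's theorem) I pick such a point $\hat z=(\hat x,\hat y)$. There $\nabla w^{\varepsilon}(\hat z)$ is close to $\nabla\varphi(z_0)$ and, writing $M=\nabla^2w^{\varepsilon}(\hat z)=\mathrm{diag}(X^{\varepsilon},Y^{\varepsilon})$, one has $-\tfrac1\varepsilon I_N\le M\le Z+o(1)$. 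The sup-convolution jet correspondence transfers this to genuine jets of $u,v$: the maximizer defining $u^{\varepsilon}(\hat x)$ is $\bar x=\hat x+\varepsilon\nabla u^{\varepsilon}(\hat x)$, and $\big(\nabla u^{\varepsilon}(\hat x),\,X^{\varepsilon}(I+\varepsilon X^{\varepsilon})^{-1}\big)\in J^{2,+}u(\bar x)$, similarly for $v$. Since $t\mapsto t(1+\varepsilon t)^{-1}$ is operator monotone and satisfies $Z(I+\varepsilon Z)^{-1}\le Z+\varepsilon Z^2$, the upper bound propagates through the block-diagonal $M\mapsto M(I+\varepsilon M)^{-1}$ to give $\mathrm{diag}(X,Y)\le Z+\varepsilon Z^2$; the lower bound $-(\tfrac1\varepsilon+\|Z\|)I$ follows from semiconvexity together with the upward monotonicity of $J^{2,+}$ in its Hessian slot, which lets one raise the transferred matrices into the prescribed range. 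Sending the contact points to $z_0$ and invoking the definition of the closures $\overline{J}^{2,+}$ then delivers $(\xi,X)\in\overline{J}^{2,+}u(x_0)$ and $(\eta,Y)\in\overline{J}^{2,+}v(y_0)$ with $\xi=\nabla_x\varphi(z_0)$, $\eta=\nabla_y\varphi(z_0)$.

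\textbf{Main obstacle.} The crux is the bookkeeping in the last step: obtaining \emph{simultaneously} the sharp two-sided bound (the $\varepsilon Z^2$ upper slack and the $-(\tfrac1\varepsilon+\|Z\|)$ lower slack), the exact block-diagonal form, and the gradient identities, all while controlling the base points $\bar x,\bar y$ so that the closure limit lands precisely at $x_0,y_0$. Reconciling the displacement $\bar x-\hat x=\varepsilon\nabla u^{\varepsilon}(\hat x)$ with the contact-point convergence, and pushing the regularized bound through $M\mapsto M(I+\varepsilon M)^{-1}$ without forfeiting the lower bound (where eigenvalues of $M$ can approach $-\tfrac1\varepsilon$), is exactly the delicate matrix analysis carried out in the appendix of \cite{CIL}; I expect this to absorb essentially all of the effort.
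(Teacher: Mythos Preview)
The paper does not actually prove this lemma; it simply quotes it as \cite[Theorem~3.2]{CIL} and uses it as a black box in the subsequent arguments. So there is no ``paper's own proof'' to compare your proposal against.

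That said, what you have outlined is precisely the classical Crandall--Ishii--Jensen scheme (sup-convolution, Alexandrov's theorem, Jensen's lemma, and the magic-matrix-property transfer), which is exactly the proof given in \cite{CIL}. Your identification of the crux---keeping the block-diagonal structure through the map $M\mapsto M(I+\varepsilon M)^{-1}$ while simultaneously landing the base points at $(x_0,y_0)$ and retaining both the upper and lower matrix bounds---is accurate, and your own remark that this ``is exactly the delicate matrix analysis carried out in the appendix of \cite{CIL}'' is the right assessment. In short: your plan is correct and is the standard proof, but the paper itself offers nothing to compare it with beyond the citation.
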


We say that the superposition principle in disjoint variables holds for a family of operators $L_n$, $n\in\mathbb{N}$ if for two domains $\U\subset \R^{n}$, $\V\subset \R^m$ and $f\in C(\U)$, $g\in C(\V)$, when $u\in C(\U)$, $v\in C(\V)$ are, respectively, a viscosity solution of the 
\[
L_n(x, \nabla u, \nabla^2 u)=f \quad \text{in $\U$}
\]
(with dimension $d=n$ for $L$) and a viscosity solution of 
\[
L_m(y, \nabla v, \nabla^2 v)=g \quad \text{in $\V$}
\]
(with dimension $d=m$ for $L$), the disjoint sum $w\in C(\U\times \V)$ given by  
\[
w(x, y)=u(x)+v(y), \quad (x, y)\in \U\times \V\subset \R^m\times \R^n
\]
is a viscosity solution of 
\[
L_d(x, y, \nabla w(x, y), \nabla^2 w(x, y))=f(x)+g(y) \ \quad \text{for $(x, y)\in \U\times \V$}
\]
(with dimension $d=n+m$ for $L$). 
\section{Main results}
\subsection{Superposition in disjoint variables}\label{main}
The following is our main result: 

\begin{theorem}[Superposition in disjoint variables for the infinity Laplacian]\label{infsuper}
 Let $\U\subset \mathbb{R}^n$ and $\V\subset \R^m$ be domains. Suppose $f\in C(\U)$ and $g\in C(V)$. Assume that $u$ is a viscosity subsolution (resp., supersolution, solution) to 
   \begin{equation}\label{u-inf}
   -\Delta_\infty u(x)=f(x)\quad \text{in $\U$}
   \end{equation}
    and $v$ is a viscosity subsolution (resp., supersolution, solution)  to 
   \begin{equation}\label{v-inf}
   -\Delta_\infty v(y)=g(y)\quad \text{in $\V$.}
   \end{equation}
Then $w(x,y)=u(x)+v(y)$ is a viscosity subsolution (resp., supersolution, solution) to 
   \begin{equation}\label{w-inf}
   -\Delta_\infty w(x, y)=f(x)+g(y) \quad \text{in $\U\times \V\subset \mathbb{R}^n\times \mathbb{R}^m$.}
   \end{equation}    

\end{theorem}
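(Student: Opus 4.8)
The plan is to reduce the statement to the subsolution case and then invoke the Theorem on Sums (Lemma~\ref{TOS}). First I would record two easy reductions. Since $\Delta_\infty$ is odd, i.e. $\Delta_\infty(-\psi)=-\Delta_\infty\psi$ for every $\psi\in C^2$, a function is a viscosity supersolution of $-\Delta_\infty(\cdot)=h$ precisely when its negative is a viscosity subsolution of $-\Delta_\infty(\cdot)=-h$; applying this to $u$, $v$ and noting that $-w(x,y)=(-u)(x)+(-v)(y)$ is again a disjoint sum, the supersolution part of the theorem follows from the subsolution part applied to $(-u,-v,-f,-g)$, and the solution part follows by combining the two. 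I would also note that $u\in USC(\U)$, $v\in USC(\V)$ imply $w\in USC(\U\times\V)$ (and $w\in C(\U\times\V)$ if $u,v$ are continuous), so $w$ is an admissible sub/supersolution candidate.

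So assume $u$ is a viscosity subsolution of \eqref{u-inf} and $v$ of \eqref{v-inf}, let $\varphi\in C^2(\U\times\V)$, and suppose $w-\varphi$ has a local maximum at $(x_0,y_0)$; the goal is $-\Delta_\infty\varphi(x_0,y_0)\le f(x_0)+g(y_0)$. I would first perform the standard reduction: replacing $\varphi(x,y)$ by $\varphi(x,y)+|x-x_0|^4+|y-y_0|^4$ makes the maximum strict without changing $\nabla\varphi$ or $\nabla^2\varphi$ at $(x_0,y_0)$ (hence without changing $\Delta_\infty\varphi(x_0,y_0)$), and restricting $\U$, $\V$ to small balls $B_r(x_0)$, $B_\rho(y_0)$ turns it into a global maximum of $(x,y)\mapsto u(x)+v(y)-\varphi(x,y)$ over the product, to which Lemma~\ref{TOS} applies (restrictions of viscosity subsolutions being viscosity subsolutions).

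Then, for fixed $\varepsilon>0$, Lemma~\ref{TOS} yields $(\xi,X)\in\overline{J}^{2,+}u(x_0)$ and $(\eta,Y)\in\overline{J}^{2,+}v(y_0)$ with $\xi=\nabla_x\varphi(x_0,y_0)$, $\eta=\nabla_y\varphi(x_0,y_0)$, and
\[
\begin{pmatrix} X & 0 \\ 0 & Y \end{pmatrix}\le Z+\varepsilon Z^2,\qquad Z:=\nabla^2\varphi(x_0,y_0)\in\S^{n+m}.
\]
Because $L(x,\xi,X)=-\langle X\xi,\xi\rangle-f(x)$ is jointly continuous, the subsolution property passes to the closed semijets, so $-\langle X\xi,\xi\rangle\le f(x_0)$ and $-\langle Y\eta,\eta\rangle\le g(y_0)$. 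The decisive observation is that the full gradient $\zeta:=\nabla\varphi(x_0,y_0)$ is exactly $(\xi,\eta)$; testing the matrix inequality against this single vector gives
\[
\langle X\xi,\xi\rangle+\langle Y\eta,\eta\rangle\le\langle Z\zeta,\zeta\rangle+\varepsilon|Z\zeta|^2=\Delta_\infty\varphi(x_0,y_0)+\varepsilon|Z\zeta|^2,
\]
where I used that $Z$ is symmetric so $\langle Z^2\zeta,\zeta\rangle=|Z\zeta|^2$, and that $\langle Z\zeta,\zeta\rangle=\langle\nabla^2\varphi(x_0,y_0)\nabla\varphi(x_0,y_0),\nabla\varphi(x_0,y_0)\rangle=\Delta_\infty\varphi(x_0,y_0)$. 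Combining the three inequalities yields $-\Delta_\infty\varphi(x_0,y_0)\le f(x_0)+g(y_0)+\varepsilon|Z\zeta|^2$, and since $Z,\zeta$ are independent of $\varepsilon$, letting $\varepsilon\to0$ completes the subsolution case.

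I expect the only real obstacle to be this bookkeeping step: a priori the block-diagonal matrix $\mathrm{diag}(X,Y)$ in the Theorem on Sums controls $X$ and $Y$ only through ``separated'' test vectors, and one must recognize that the separated structure $\nabla\varphi(x_0,y_0)=(\nabla_x\varphi,\nabla_y\varphi)=(\xi,\eta)$ is precisely what makes pairing against $\zeta$ recover the full infinity-Laplacian quadratic form. By contrast, there is no genuine difficulty arising from the gradient singularity of $\Delta_\infty$: the operator $L(x,\xi,X)=-\langle X\xi,\xi\rangle-f(x)$ is continuous everywhere (including at $\xi=0$), so the equivalent characterization of viscosity solutions via closed semijets recorded in the Preliminaries applies directly, and the remaining points — the quartic perturbation and the passage to a global maximum on small balls — are entirely routine.
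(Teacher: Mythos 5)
Your proposal is correct and follows essentially the same route as the paper: apply the Theorem on Sums to $u(x)+v(y)-\varphi(x,y)$ and test the block inequality $\mathrm{diag}(X,Y)\le Z+\varepsilon Z^2$ against the full gradient $\nabla\varphi(x_0,y_0)=(\xi,\eta)$, which is exactly the paper's key step (your explicit constant $\varepsilon|Z\zeta|^2$ is the paper's $C\varepsilon$). The extra reductions you include — the sign-flip argument deducing the supersolution case from the subsolution case, and the quartic perturbation to obtain a strict global maximum before invoking the lemma — are routine details the paper leaves implicit rather than a different method.
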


\begin{proof}
    Let us prove the result for subsolutions $u\in USC(\U)$ and $v\in USC(\V)$. Assume that $\varphi\in C^2(\U\times \V)$ and $u(x)+v(y)-\varphi(x,y)$ attains a local maximum at $(x_0,y_0)\in \U\times \V$. Let $\varepsilon>0$. For every $\varepsilon>0$, by Lemma \ref{TOS} (Theorem on sums), there exist $(\xi, X_\varepsilon)\in \overline{J}^{2,+}u(x_0)$ and $(\eta, Y_\varepsilon)\in \overline{J}^{2,+}v(y_0)$ such that \[
    \xi=\nabla_x \varphi(x_0,y_0),\quad\eta=\nabla_y \varphi(x_0,y_0),
    \]and 
    \begin{equation}\label{2ndest}  
 \begin{pmatrix}
X_\varepsilon & 0 \\
0 & Y_\varepsilon
\end{pmatrix}\le Z+\varepsilon Z^2,
\end{equation}
where $Z=\nabla^2 \varphi(x_0, y_0)\in \S^{n+m}$.
  Multiplying \eqref{2ndest} by the row vector $\nabla \varphi(x_0,y_0)$ from left and by its transpose from right, we obtain
    \[
    \begin{aligned}
    \langle X_\varepsilon \xi, \xi\rangle+\langle Y_\varepsilon \eta, \eta\rangle \le 
       \langle \nabla^2 \varphi(x_0,y_0)\nabla \varphi(x_0,y_0), \nabla \varphi(x_0,y_0)\rangle+C\varepsilon 
    \end{aligned}
    \]
     for some constant $C>0.$ 
     
     Since $u$ is a viscosity subsolution to $-\Delta_\infty u=f$ and $(\xi, X_\varepsilon)\in \Jop u(x_0)$, we have
     \[
    - \langle X_\varepsilon \xi, \xi\rangle\le f(x_0). 
     \]
Likewise, by the supersolution property of $v$ we get
\[
    - \langle Y_\varepsilon \eta, \eta\rangle\le g(y_0).
     \]
Combining the above inequalities, we have 
\[
-\langle \nabla^2 \varphi(x_0,y_0)\nabla \varphi(x_0,y_0), \nabla \varphi(x_0,y_0)\rangle\le f(x_0)+g(y_0)+C\varepsilon.
\]
Letting $\varepsilon\to 0$, we complete the proof. 

The above argument also applies when $u,v$ are viscosity supersolutions or solutions instead of subsolutions. Hence the superposition property in disjoint variables holds for the non-homogeneous infinity Laplace equation.
    \end{proof}

 %

\begin{remark} 
 For $\Om\subset \R$, $K\in \R$, Hong and Feng \cite{HF} consider a point-wise superposition of viscosity solutions to the following inhomogeneous infinity Laplace equation
\begin{equation}\label{infK}
-\Delta_\infty u=K \text{ in $\Om$}.
\end{equation}
In \cite[Definition 1]{HF}, a function $u\in USC(\Om)$ is called a subsolution of $-\Delta_\infty u(x_0)\le K$ at a point $x_0\in \R$ in the viscosity sense if \[ -\Delta_\infty \phi (x_0)\le K \] is true whenever there exists a test function $\phi\in C^2(\Om)$ such that $u-\phi$ attains a local maximum at $x_0$. Note that this notion is restricted to tests at $x_0$ and therefore is much weaker than the usual definition 
of viscosity subsolutions to \eqref{infK} in the whole domain $\Om$. 

In a domain $\Om$ containing $0$, Hong and Feng constructed in \cite[Section 3]{HF} two functions $u$ and $v$ satisfying
\[
-\Delta_\infty u(0)\le -1 \quad \text{ and } -\Delta_\infty v(0)\le -1,
\]
but 
\[
-\Delta_\infty (u+v)(0) > -2 
\]
under their notion of viscosity subsolutions and supersolutions at $0$. This example does not contradict our main theorem, since $u,v$ are not necessarily viscosity solutions to \eqref{infK} in $\Om$ in the usual sense. In fact, our result of Theorem \ref{infsuper} is obtained by using Theorem on sums, whose proof involves appropriate perturbations and requires the subsolution property of $u$ a neighborhood of $x_0$ and of $v$ in a neighborhood of $y_0$. The subsolution property only at the points $x_0$ or $y_0$ is not sufficient to yield our superposition result. 
\end{remark}

\begin{remark}\label{Lsuperdis}
    The argument for Theorem \ref{infsuper} can be easily adapted to obtain the superposition property in disjoint variables for inhomogeneous Laplace equations. Following the same proof of Theorem \ref{infsuper}  and taking trace on both sides of \eqref{2ndest}, we obtain that
    \[
\tr(X_\varepsilon)+\tr(Y_\varepsilon)\le \tr(\nabla^2\varphi(x_0,y_0))+C\varepsilon.
    \]
When $-\Delta u\leq f(x)$ in $\U$ and $-\Delta v\leq g(y)$ in $\U$ hold in the viscosity sense, we have
\[
-\Delta\varphi(x_0,y_0)\le f(x_0)+g(y_0)+C\varepsilon,
    \]
which implies 
 \[
-\Delta\varphi(x_0,y_0)\le f(x_0)+g(y_0)
    \]
    by sending $\varepsilon$ to zero. We can similarly prove the supersolution property as well. 
\end{remark}

\begin{remark}
    Combining Theorem \ref{infsuper} and Remark \ref{Lsuperdis}, one can deduce the property of superposition in disjoint variables for an inhomogeneous equation with $\mathcal{L}[u]=-\Delta u-\Delta_\infty u$. In fact, the infinity Laplace equation and Laplace equation are prototypes of two general classes of elliptic equations for which the superposition in disjoint variables holds. Likewise, this property can be extended to linear combinations of these two classes of equations. More details and examples will be given in \cite{LMZ24}. 
\end{remark}

\subsection{Superposition in common variables for sub-additive elliptic equations} \label{convex} 

It is easily seen that the superposition principle in common variables (rather than disjoint ones) holds for smooth harmonic functions. In the following, we use our arguments to give a proof for viscosity solutions.


\begin{proposition}[Superposition for Laplacian]\label{Lapsup}
    Let $\Omega\subset \mathbb{R}^n$ be a domain. Assume that $u, v$  are viscosity subsolutions (resp., supersolutions, solutions) to 
    \[
    -\Delta u=0 \quad \text{in $\Omega$}.
    \] Then $u+v$ is a viscosity subsolution (resp., supersolution, solution) to the same equation. 
\end{proposition}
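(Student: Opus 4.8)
The plan is to derive the subsolution case from the Theorem on Sums (Lemma~\ref{TOS}) via the standard doubling-of-variables device; doubling is needed precisely because the hypothesis concerns $u$ and $v$ as functions of the \emph{same} variable, whereas Lemma~\ref{TOS} is stated for separate variables. The supersolution case will then follow by applying the subsolution case to $-u$ and $-v$, since $-\Delta u=0$ is an odd equation, and the solution case by combining the two.

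So suppose $u,v\in USC(\Omega)$ are viscosity subsolutions of $-\Delta u=0$, let $\phi\in C^2(\Omega)$, and assume $u+v-\phi$ has a local maximum at $x_0$; the goal is $-\Delta\phi(x_0)\le 0$. First I would replace $\phi$ by $\phi+|\,\cdot-x_0|^4$ to make the maximum strict, which does not change $\Delta\phi(x_0)$ since the added term has vanishing Hessian at $x_0$. For $\alpha>0$ I would then set
\[
\varphi_\alpha(x,y)=\phi\Big(\frac{x+y}{2}\Big)+\frac{|x-y|^2}{2\alpha},\qquad (x,y)\in\Omega\times\Omega,
\]
and consider $(x,y)\mapsto u(x)+v(y)-\varphi_\alpha(x,y)$. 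A routine compactness argument on a small closed ball around $x_0$ shows this attains an interior maximum at some $(x_\alpha,y_\alpha)$ with $x_\alpha,y_\alpha\to x_0$ and $|x_\alpha-y_\alpha|^2/\alpha\to 0$ as $\alpha\to0$. Applying Lemma~\ref{TOS} there, with an auxiliary parameter $\beta>0$, produces $(\xi,X)\in\overline{J}^{2,+}u(x_\alpha)$ and $(\eta,Y)\in\overline{J}^{2,+}v(y_\alpha)$ with
\[
\begin{pmatrix}X&0\\0&Y\end{pmatrix}\le Z_\alpha+\beta Z_\alpha^2,\qquad Z_\alpha=\nabla^2\varphi_\alpha(x_\alpha,y_\alpha)=\frac14\begin{pmatrix}A&A\\A&A\end{pmatrix}+\frac1\alpha\begin{pmatrix}I&-I\\-I&I\end{pmatrix},
\]
where $A=\nabla^2\phi\big(\tfrac{x_\alpha+y_\alpha}{2}\big)$.

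The main obstacle — and essentially the only subtle point — is that one must not take the plain trace of this matrix inequality, because $\tr Z_\alpha$ carries the blow-up term $2n/\alpha$. The fix is to pair the inequality against the diagonal vectors $(e_i,e_i)\in\R^{2n}$, $i=1,\dots,n$: the penalization block vanishes on the diagonal subspace, $\begin{pmatrix}I&-I\\-I&I\end{pmatrix}(e_i,e_i)=0$, so the $\alpha^{-1}$ contribution drops out and one is left with $\langle Xe_i,e_i\rangle+\langle Ye_i,e_i\rangle\le\langle Ae_i,e_i\rangle+\tfrac{\beta}{2}|Ae_i|^2$. Summing over $i$ gives $\tr X+\tr Y\le\tr A+\tfrac{\beta}{2}\|A\|_F^2=\Delta\phi\big(\tfrac{x_\alpha+y_\alpha}{2}\big)+\tfrac{\beta}{2}\|A\|_F^2$. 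Since $u,v$ are subsolutions of $-\Delta u=0$ and $(\xi,X)\in\overline{J}^{2,+}u(x_\alpha)$, $(\eta,Y)\in\overline{J}^{2,+}v(y_\alpha)$, we have $\tr X\ge0$ and $\tr Y\ge0$, hence $0\le\Delta\phi\big(\tfrac{x_\alpha+y_\alpha}{2}\big)+\tfrac{\beta}{2}\|A\|_F^2$. Letting $\beta\to0$ and then $\alpha\to0$ — the Frobenius term staying bounded because $\tfrac{x_\alpha+y_\alpha}{2}$ remains in a fixed compact neighborhood of $x_0$ — we conclude $-\Delta\phi(x_0)\le0$, which is the desired subsolution inequality.
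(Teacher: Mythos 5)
Your proof is correct and follows essentially the same route as the paper's: doubling of variables with quadratic penalization, the Theorem on Sums, and the key step of pairing the resulting matrix inequality against diagonal vectors $(e_i,e_i)$ to annihilate the singular $\alpha^{-1}$ block before taking traces. The only (immaterial) difference is bookkeeping: the paper absorbs the test function into $u$ and applies the Theorem on Sums to $u-\varphi$ and $v$ with the pure penalization as the coupling function, whereas you keep $\phi\big(\tfrac{x+y}{2}\big)$ inside the coupling function and control the extra $\beta Z_\alpha^2$ term explicitly on the diagonal subspace.
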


\begin{proof}
Let $\varphi\in C^2(\Omega)$ and $\O\subset\subset \Omega$ be an open and bounded set. 
Assume that $u(x)+v(x)-\varphi(x)$ reaches a maximum in $\overline{\O}$ at $x=x_0$. Without changing the value of $-\Delta \varphi(x_0)$, we can add $|x-x_0|^4$ to $\varphi$ and assume that $x_0$ is the unique maximizer of $u+v-\varphi$ in $\ol{\O}$.   Suppose that
\[
\max_{x\in \overline{\O}}(u(x)+v(x)-\varphi(x))=u(x_0)+v(x_0)-\varphi(x_0)=\delta.
\]
Let $\varepsilon>0$. Consider $\Phi_\varepsilon:\overline{\O}\times\overline{\O}\to \R$ defined as
\[
\Phi_\varepsilon(x,y)=u(x)-\varphi(x)+v(y)-\frac{|x-y|^2}{2\varepsilon}.
\]
Then there exists $(x_\varepsilon, y_\varepsilon)\in \overline{\O}\times \overline{\O}$ such that
\[
\max_{\overline{\O}\times\overline{\O}}\Phi_\varepsilon(x,y)=\Phi(x_\varepsilon, y_\varepsilon)\ge \delta,
\]
which implies
\[
\frac{|x_\varepsilon-y_\varepsilon|^2}{2\varepsilon}\le u(x_\varepsilon)-\varphi(x_\varepsilon)+v(y_\varepsilon)-\delta.
\]
Due to the boundedness of $u$ and $v$ above, we therefore have $|x_\vep-y_\vep|\to 0$ as $\vep\to 0$. Since $\overline{\O}$ is compact, there exist subsequences such that $x_\varepsilon\to \hat{x}$ and $y_\varepsilon\to \hat{x}$ for some $\hat{x}\in \overline{\O}$. We still index the subsequences by $\vep$ for simplicity. 

By the upper semicontinuity of $u$ and $v$, we let $\vep\to 0$ to get
\[
\limsup_{\varepsilon\to 0}\frac{|x_\varepsilon-y_\varepsilon|^2}{2\varepsilon}\le u(\hat{x})-\varphi(\hat{x})+v(\hat{x})-\delta,
\]
which implies $u(\hat{x})-\varphi(\hat{x})+v(\hat{x})\geq \delta$. 
Since $u+v-\varphi$ achieves a unique maximum in $\ol{\O}$ at $x_0$, we therefore have $x_0=\hat{x}$. 


Applying Lemma \ref{TOS} to 
\[
(x, y)\mapsto (u(x)-\varphi(x))+v(y)-\frac{|x-y|^2}{2\varepsilon},
\] 
we obtain 
\[
(\xi_\varepsilon, X_\varepsilon)\in \Jop(u-\varphi)(x_\varepsilon)\quad\text{ and } (\eta_\varepsilon, Y_\varepsilon)\in \Jop(v)(y_\varepsilon)\]
with
\[
\xi_\varepsilon=\frac{x_\varepsilon-y_\varepsilon}{\varepsilon}, \quad \eta_\varepsilon=\frac{y_\varepsilon-x_\varepsilon}{\varepsilon}
\]
and
\[
\begin{pmatrix}
X_\varepsilon & 0 \\
0 & Y_\varepsilon 
\end{pmatrix}\le \frac{3}{\varepsilon}\begin{pmatrix}
I & -I \\
-I & I 
\end{pmatrix}.
\]
Let $\zeta\in \mathbb{R}^n$ be an arbitrary vector. Multiplying the above inequality by $(\zeta, \zeta)$ from the left and its transpose $(\zeta, \zeta)^T$ from the right gives 
\begin{equation}\label{negH}
\langle(X_\varepsilon+Y_\varepsilon)\zeta, \zeta\rangle \le 0. 
\end{equation}
Hence, $\tr(X_\varepsilon)+\tr(Y_\varepsilon)\le 0.$
On the other hand, since $\varphi\in C^2(\Omega)$, we can easily verify that 
\[
(\xi_\varepsilon+\nabla \varphi(x_\varepsilon), \ X_\varepsilon+\nabla^2\varphi(x_\varepsilon))\in \Jop u(x_\varepsilon).
\]
By the fact that $u,v$ are viscosity subsolutions to $-\Delta u=0$, we have
\[
-\tr(X_\varepsilon+\nabla^2\varphi(x_\varepsilon))\le 0
\]
and 
\[
-\tr(Y_\varepsilon)\le 0.
\]
Adding the above inequalities and applying $\tr(X_\varepsilon)+\tr(Y_\varepsilon)\le 0$, we get
\[
-\Delta \varphi(x_\vep)=-\tr(\nabla^2\varphi(x_\varepsilon))\le 0
\]
 We conclude $-\Delta \varphi(x_0)\leq 0$ by letting $\varepsilon\to 0$. The proof is thus complete. 
\end{proof}

The above proposition is a special case of the superposition property for more general operators $L(\xi, X)$ satisfying the following subadditivity.
\begin{equation}\label{sub-add}
L(\xi+\eta, X+Y)\leq L(\xi, X)+L(\eta, Y) \quad \text{for $\xi, \eta\in \R^n$, $X, Y\in \S^n$.}
\end{equation}

\begin{theorem}[Subsolutions of sub-additive equations]\label{super-common}
   Let $\Omega\subset \mathbb{R}^n$ be a domain, and $f, g\in C(\Omega)$. Assume that $L:\R^n\times \S^n\to \R$ is a continuous elliptic operator satisfying \eqref{sub-add}. 
Assume that $u\in USC(\Omega)$ is a viscosity subsolution of \begin{equation}\label{conL}
     L(\nabla u,\nabla^2 u)=f\quad \text{in $\Omega$}
     \end{equation}
     and $v\in USC(\Omega)$ is a viscosity subsolution of 
     \begin{equation}\label{conL2}
     L(\nabla v,\nabla^2 v)=g\quad \text{in $\Omega$}.
     \end{equation}
    Then, $w=u+v$ is a viscosity subsolution to \begin{equation}\label{conL3}
     L(\nabla w,\nabla^2 w)=f+g\quad \text{in $\Omega$}.
     \end{equation} 
\end{theorem}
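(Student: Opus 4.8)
The plan is to follow the scheme of the proof of Proposition~\ref{Lapsup}, replacing the trace computation there by an application of the subadditivity hypothesis \eqref{sub-add} together with ellipticity \eqref{ellip}. Fix $\varphi\in C^2(\Omega)$, suppose $w-\varphi=u+v-\varphi$ has a local maximum at $x_0\in\Omega$, pick an open bounded set $\mathcal{O}\subset\subset\Omega$ with $x_0\in\mathcal{O}$ on which $x_0$ realizes $\max_{\overline{\mathcal{O}}}(w-\varphi)$, and, after adding $|x-x_0|^4$ to $\varphi$ (which changes neither $\nabla\varphi(x_0)$ nor $\nabla^2\varphi(x_0)$), assume $x_0$ is the unique such maximizer. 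I would then double variables, considering on $\overline{\mathcal{O}}\times\overline{\mathcal{O}}$
\[
\Phi_\varepsilon(x,y)=\bigl(u(x)-\varphi(x)\bigr)+v(y)-\frac{|x-y|^2}{2\varepsilon},
\]
and run the standard penalization exactly as in Proposition~\ref{Lapsup}: maximizers $(x_\varepsilon,y_\varepsilon)$ of $\Phi_\varepsilon$ over $\overline{\mathcal{O}}\times\overline{\mathcal{O}}$ exist, $|x_\varepsilon-y_\varepsilon|^2/\varepsilon\to 0$, and, along a subsequence, $x_\varepsilon\to x_0$ and $y_\varepsilon\to x_0$ by uniqueness of the maximizer of $w-\varphi$ and upper semicontinuity of $u$ and $v$.

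Next I would apply the Theorem on Sums (Lemma~\ref{TOS}) to $\Phi_\varepsilon$, obtaining $(\xi_\varepsilon,X_\varepsilon)\in\Jop(u-\varphi)(x_\varepsilon)$ and $(\eta_\varepsilon,Y_\varepsilon)\in\Jop v(y_\varepsilon)$ with $\xi_\varepsilon=(x_\varepsilon-y_\varepsilon)/\varepsilon=-\eta_\varepsilon$ and
\[
\begin{pmatrix} X_\varepsilon & 0\\ 0 & Y_\varepsilon\end{pmatrix}\le\frac{3}{\varepsilon}\begin{pmatrix} I & -I\\ -I & I\end{pmatrix}.
\]
Testing this against vectors of the form $(\zeta,\zeta)$ gives $X_\varepsilon+Y_\varepsilon\le 0$, as in \eqref{negH}. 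Since $\varphi\in C^2$, one has $\bigl(\xi_\varepsilon+\nabla\varphi(x_\varepsilon),\,X_\varepsilon+\nabla^2\varphi(x_\varepsilon)\bigr)\in\Jop u(x_\varepsilon)$, so the subsolution property of $u$ for \eqref{conL} gives $L\bigl(\xi_\varepsilon+\nabla\varphi(x_\varepsilon),\,X_\varepsilon+\nabla^2\varphi(x_\varepsilon)\bigr)\le f(x_\varepsilon)$, while the subsolution property of $v$ for \eqref{conL2} gives $L(\eta_\varepsilon,Y_\varepsilon)\le g(y_\varepsilon)$.

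The decisive step is to combine these two inequalities via \eqref{sub-add}, applied to the pairs $(\xi_\varepsilon+\nabla\varphi(x_\varepsilon),\,X_\varepsilon+\nabla^2\varphi(x_\varepsilon))$ and $(\eta_\varepsilon,Y_\varepsilon)$: since $\xi_\varepsilon+\eta_\varepsilon=0$,
\[
L\bigl(\nabla\varphi(x_\varepsilon),\,\nabla^2\varphi(x_\varepsilon)+X_\varepsilon+Y_\varepsilon\bigr)\le L\bigl(\xi_\varepsilon+\nabla\varphi(x_\varepsilon),\,X_\varepsilon+\nabla^2\varphi(x_\varepsilon)\bigr)+L(\eta_\varepsilon,Y_\varepsilon)\le f(x_\varepsilon)+g(y_\varepsilon).
\]
Because $X_\varepsilon+Y_\varepsilon\le 0$, we have $\nabla^2\varphi(x_\varepsilon)+X_\varepsilon+Y_\varepsilon\le\nabla^2\varphi(x_\varepsilon)$, so ellipticity \eqref{ellip} yields $L(\nabla\varphi(x_\varepsilon),\nabla^2\varphi(x_\varepsilon))\le L(\nabla\varphi(x_\varepsilon),\nabla^2\varphi(x_\varepsilon)+X_\varepsilon+Y_\varepsilon)\le f(x_\varepsilon)+g(y_\varepsilon)$. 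Letting $\varepsilon\to 0$ along the subsequence and using $x_\varepsilon,y_\varepsilon\to x_0$, the continuity of $f$ and $g$, and $\varphi\in C^2$, we obtain $L(\nabla\varphi(x_0),\nabla^2\varphi(x_0))\le f(x_0)+g(x_0)$, which is the viscosity subsolution inequality for $w$ in \eqref{conL3}.

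I do not expect a genuine obstacle: the only delicate point is the localization ensuring $x_\varepsilon,y_\varepsilon\to x_0$ rather than converging to another maximizer of $w-\varphi$, and this is handled exactly as in Proposition~\ref{Lapsup} via the perturbation $|x-x_0|^4$ and upper semicontinuity. It is worth emphasizing that the argument genuinely exploits subadditivity in the direction \eqref{sub-add} together with the sign $X_\varepsilon+Y_\varepsilon\le 0$ produced by the Theorem on Sums; this is precisely why the statement is restricted to subsolutions, since the analogous claim for supersolutions would instead require the reverse (superadditive) inequality on $L$.
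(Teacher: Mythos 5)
Your proposal is correct and follows essentially the same route as the paper: doubling variables as in Proposition~\ref{Lapsup}, invoking the Theorem on Sums to obtain $\xi_\varepsilon=-\eta_\varepsilon$ and $X_\varepsilon+Y_\varepsilon\le 0$, and then combining ellipticity \eqref{ellip} with subadditivity \eqref{sub-add} before passing to the limit. The only cosmetic difference is the order in which you apply the two inequalities (subadditivity first, then ellipticity, versus the paper's reverse order), which changes nothing.
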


\begin{proof}
     Assume that $u(x)+v(x)-\varphi(x)$ attains a strict maximum in $\overline{\O}$ in $x_0\in \O$ for some $\varphi\in C^2(\Omega)$ and ${\O}\subset\subset \Omega$. The same argument as in the proof of Proposition \ref{Lapsup} implies the existence of a sequence $x_\vep\to x_0$ and 
\[
(\xi_\varepsilon+\nabla \varphi(x_\varepsilon), X_\varepsilon+\nabla^2 \varphi(x_\varepsilon))\in \Jop u(x_\varepsilon), \quad(\eta_\varepsilon, Y_\varepsilon)\in \Jop v(y_\varepsilon),
\]
with $\xi_\varepsilon=-\eta_\varepsilon$, and $X_\varepsilon+Y_\varepsilon\le 0$. 

It follows from the ellipticity of $L$ and the condition \eqref{sub-add} that
\begin{equation}\label{test-L}
\begin{aligned}
L(\nabla \varphi(x_\varepsilon), \nabla^2\varphi(x_\varepsilon))&\le L\left(\nabla \varphi(x_\varepsilon),\ \nabla^2\varphi(x_\varepsilon)+ X_\varepsilon+Y_\varepsilon\right)\\
&=L\left(\xi_\varepsilon+\nabla \varphi(x_\varepsilon)+\eta_\vep, \nabla^2\varphi(x_\varepsilon)+ X_\varepsilon+Y_\varepsilon\right)\\
&\le L\left(\xi_\varepsilon+\nabla \varphi(x_\varepsilon), X_\varepsilon+\nabla^2 \varphi(x_\varepsilon)\right)+L(\eta_\varepsilon, Y_\varepsilon).
\end{aligned}
\end{equation}
Then the subsolution property of $u$ and $v$ immediately yields 
\[
L(\nabla \varphi(x_\varepsilon), \nabla^2\varphi(x_\varepsilon))\leq f(x_\vep)+g(y_\vep).
\]
Passing to the limit as $\vep\to 0$, we obtain 
\[
L(\nabla \varphi(x_0), \nabla^2\varphi(x_0))\leq f(x_0)+g(x_0),
\]
as desired. 
\end{proof}

We can obtain the same property for viscosity supersolutions if we assume the superadditivity of $L$: 
\[
L(\xi+\eta, X+Y)\geq L(\xi, X)+L(\eta, Y) \quad \text{for $\xi, \eta\in \R^n$, $X, Y\in \S^n$.}
\]
The superposition principle for viscosity solutions thus holds when $L$ is linear.


Another related result is due to Caffarelli and Cabr\'e \cite[Theorem 5.8]{CaCr}, who showed that, for a convex elliptic operator $L:\S^n\to \R$,  if $u,v\in USC(\Omega)$ are viscosity subsolutions to 
\[
L(\nabla^2 u)=0\quad \text{in $\Omega$,}
\] 
then $(u+v)/2$ is also a viscosity subsolution to the same equation. 

We can slightly generalize \cite[Theorem 5.8]{CaCr} for a convex elliptic operator depending also on the gradient, adapting the arguments in the proof of Theorem \ref{super-common}. 

\begin{proposition}[Subsolutions of convex equations]\label{prop-convex}
    Let $\Omega\subset \mathbb{R}^n$ be a domain.  Assume that $L:\R^n\times \S^n\to \R$ is a continuous elliptic operator satisfying 
    \begin{equation}\label{convexL}
\frac{1}{2}L(\xi, X)+\frac{1}{2}L(\eta, Y)\ge L\left(\frac{\xi+\eta}{2}, \frac{X+Y}{2}\right).
\end{equation}
Assume that $u, v\in USC(\Omega)$ are viscosity subsolutions of 
\[
L(\nabla u, \nabla^2 u)=0 \quad \text{in $\Omega$.}
\]
Then, $\displaystyle \frac{u+v}{2}$ is a viscosity subsolution of the same equation. 
\end{proposition}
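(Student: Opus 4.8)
The plan is to transcribe the proof of Theorem~\ref{super-common}, replacing the subadditivity \eqref{sub-add} by the convexity \eqref{convexL}. Fix $\varphi\in C^2(\Omega)$ and an open set $\O\subset\subset\Omega$, and suppose $\frac{u+v}{2}-\varphi$ attains a strict maximum over $\overline{\O}$ at an interior point $x_0$; strictness can be arranged, as in Proposition~\ref{Lapsup}, by adding $|x-x_0|^{4}$ to $\varphi$, which alters neither $\nabla\varphi(x_0)$ nor $\nabla^2\varphi(x_0)$. Multiplying through by $2$, the function $u(x)+v(x)-2\varphi(x)$ has a strict maximum over $\overline{\O}$ at $x_0$, so we are in the situation of the proofs of Proposition~\ref{Lapsup} and Theorem~\ref{super-common} with $2\varphi$ in the role of $\varphi$. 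Maximizing $(u(x)-2\varphi(x))+v(y)-\frac{|x-y|^{2}}{2\varepsilon}$ over $\overline{\O}\times\overline{\O}$ and applying the Theorem on Sums (Lemma~\ref{TOS}) produces maximizers $(x_\varepsilon,y_\varepsilon)$ with $x_\varepsilon,y_\varepsilon\to x_0$ as $\varepsilon\to 0$, together with
\[
\bigl(\xi_\varepsilon+2\nabla\varphi(x_\varepsilon),\,X_\varepsilon+2\nabla^2\varphi(x_\varepsilon)\bigr)\in\Jop u(x_\varepsilon),\qquad (\eta_\varepsilon,Y_\varepsilon)\in\Jop v(y_\varepsilon),
\]
where $\xi_\varepsilon=-\eta_\varepsilon$ and $X_\varepsilon+Y_\varepsilon\le 0$, the last inequality obtained by pairing the matrix bound of Lemma~\ref{TOS} with vectors of the form $(\zeta,\zeta)$, exactly as in Proposition~\ref{Lapsup} and Theorem~\ref{super-common}.

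The crux is to combine the two subsolution inequalities. The subsolution properties of $u$ and $v$ give $L\bigl(\xi_\varepsilon+2\nabla\varphi(x_\varepsilon),\,X_\varepsilon+2\nabla^2\varphi(x_\varepsilon)\bigr)\le 0$ and $L(-\xi_\varepsilon,Y_\varepsilon)\le 0$. The gradient slots of these two jets average to $\nabla\varphi(x_\varepsilon)$ (the terms $\pm\xi_\varepsilon$ cancel) and the matrix slots average to $\nabla^2\varphi(x_\varepsilon)+\frac{1}{2}(X_\varepsilon+Y_\varepsilon)$. Using first the ellipticity \eqref{ellip} (legitimate since $X_\varepsilon+Y_\varepsilon\le 0$) and then the convexity \eqref{convexL} of $L$, one obtains
\begin{align*}
L\bigl(\nabla\varphi(x_\varepsilon),\nabla^2\varphi(x_\varepsilon)\bigr)
&\le L\Bigl(\nabla\varphi(x_\varepsilon),\,\nabla^2\varphi(x_\varepsilon)+\tfrac{1}{2}(X_\varepsilon+Y_\varepsilon)\Bigr)\\
&\le \tfrac{1}{2}L\bigl(\xi_\varepsilon+2\nabla\varphi(x_\varepsilon),\,X_\varepsilon+2\nabla^2\varphi(x_\varepsilon)\bigr)+\tfrac{1}{2}L(-\xi_\varepsilon,Y_\varepsilon)\le 0,
\end{align*}
which is the exact analogue of the step \eqref{test-L} in the proof of Theorem~\ref{super-common}. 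Letting $\varepsilon\to 0$ and using the continuity of $L$ and the $C^{2}$ regularity of $\varphi$ yields $L(\nabla\varphi(x_0),\nabla^2\varphi(x_0))\le 0$, the desired subsolution inequality for $\frac{u+v}{2}$.

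Because the argument so closely mirrors Theorem~\ref{super-common}, I do not expect a genuine obstacle; the one point that must be handled correctly is the order and the direction of the two steps in the displayed chain. Ellipticity \eqref{ellip} is used to pass from $L\bigl(\nabla\varphi(x_\varepsilon),\nabla^2\varphi(x_\varepsilon)\bigr)$ up to $L\bigl(\nabla\varphi(x_\varepsilon),\nabla^2\varphi(x_\varepsilon)+\frac{1}{2}(X_\varepsilon+Y_\varepsilon)\bigr)$, which is licit precisely because $X_\varepsilon+Y_\varepsilon\le 0$; convexity \eqref{convexL} is then used in the form that bounds this last quantity from above by the mean $\frac{1}{2}L(\cdot)+\frac{1}{2}L(\cdot)$ of the two admissible values. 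Note that, owing to the factor $\frac{1}{2}$ in the convexity inequality, only half of $X_\varepsilon+Y_\varepsilon$ is absorbed here, in contrast with the subadditive case \eqref{test-L}; this causes no trouble since $\frac{1}{2}(X_\varepsilon+Y_\varepsilon)\le 0$ as well. It is immaterial whether all of $2\varphi$ is placed on the $u$-slot (as above) or $\varphi$ is split symmetrically between $u$ and $v$, since both choices give the same averaged jet in the limit. Everything else — the reduction to a strict maximizer, the bound $|x_\varepsilon-y_\varepsilon|\to 0$ from the boundedness of $u,v$ from above on $\overline{\O}$, and the convergence $x_\varepsilon,y_\varepsilon\to x_0$ — is already established in Proposition~\ref{Lapsup} and Theorem~\ref{super-common} and may simply be quoted; and since $L$ does not depend on $x$, nothing further is needed to handle the gradient term.
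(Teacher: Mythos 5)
Your proof is correct and follows essentially the same route as the paper's: the same doubled-variable/Theorem-on-Sums setup inherited from Proposition~\ref{Lapsup} and Theorem~\ref{super-common}, the same jets with the factor $2\varphi$ placed on the $u$-slot, and the same two-step chain (ellipticity to absorb $\tfrac{1}{2}(X_\varepsilon+Y_\varepsilon)\le 0$, then midpoint convexity to split into the two subsolution inequalities). One remark: both you and the paper actually invoke \eqref{convexL} in the reversed direction, i.e.\ genuine convexity $L\bigl(\tfrac{\xi+\eta}{2},\tfrac{X+Y}{2}\bigr)\le \tfrac{1}{2}L(\xi,X)+\tfrac{1}{2}L(\eta,Y)$, so the inequality as displayed in \eqref{convexL} appears to carry a sign typo rather than indicating any difference between your argument and the paper's.
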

\begin{proof}
The proof resembles that of Theorem \ref{super-common}.  For any test function $\varphi\in C^2(\Omega)$ for $(u+v)/2$ at $x_0$, we have 
\[
(\xi_\varepsilon+2\nabla \varphi(x_\varepsilon), X_\varepsilon+2\nabla^2 \varphi(x_\varepsilon))\in \Jop u(x_\varepsilon), \quad(\eta_\varepsilon, Y_\varepsilon)\in \Jop v(y_\varepsilon),
\]
with $\xi_\varepsilon=-\eta_\varepsilon$, $X_\varepsilon+Y_\varepsilon\le 0$ for $x_\vep, y_\vep$ approaching $x_0$ as $\vep\to 0$.

By the ellipticity and \eqref{convexL}, instead of \eqref{test-L} we get
\[
\begin{aligned}
L(\nabla \varphi(x_\varepsilon), \nabla^2\varphi(x_\varepsilon))&\le L\left(\nabla \varphi(x_\varepsilon), \frac{2\nabla^2\varphi(x_\varepsilon)}{2}+\frac{X_\varepsilon+Y_\varepsilon}{2}\right)\\
&=L\left(\frac{2\nabla \varphi(x_\varepsilon)+\xi_\varepsilon}{2}+\frac{\eta_\varepsilon}{2}, \frac{2\nabla^2\varphi(x_\varepsilon)+X_\varepsilon}{2}+\frac{Y_\varepsilon}{2}\right)\\
&\le \frac{1}{2}L\left(2\nabla \varphi(x_\varepsilon)+\xi_\varepsilon,{2\nabla^2\varphi(x_\varepsilon)}+X_\varepsilon\right)+\frac{1}{2}L(\eta_\varepsilon, Y_\varepsilon).
\end{aligned}
\]
We obtain $L(\nabla\varphi(x_0), \nabla^2 \varphi(x_0))\leq 0$ by utilizing the definition of subsolutions for $u$ and $v$ and sending $\vep\to 0$.
\end{proof}

Note that the results in Theorem \ref{super-common} and Proposition \ref{prop-convex} do not apply to the infinite Laplacian. The operator $L$ for $-\Delta_\infty$ is given by 
\[
L(\xi, X)=-\left\langle X \xi, \xi\right\rangle, \quad \xi\in \R^n, \ X\in \S^n
\]
is neither subadditive nor convex in $\R^n\times \S^n$, although it is linear with respect to $X$ for any fixed $\xi$.

The following operators satisfy the convexity condition \eqref{convexL}. 
\begin{enumerate}
    \item[(a)] The viscous or inviscid Hamilton-Jacobi operator: 
    \[
    L(\xi, X)= -a \tr X+H(\xi), \quad \xi\in \R^n,\  X\in \S^n
    \]
    with $a\geq 0$ and $H$ convex in $\R^n$. 
    \item[(b)] The Pucci operator: 
    \[
    L(X)=- \min\{\tr(AX): \lambda I\leq  A\leq \Lambda I\}, \quad \Lambda\geq \lambda >0, \quad X\in \S^n. 
    \]
    \item[(c)] The inverse curvature operator for convex functions in one dimension: 
    \[
    L(\xi, X)= \frac{(1+\xi^2)^2}{X}, \quad \xi\in \R, \ X\in \R, \ X>0. 
    \]
    Note that for $\xi\in \R$, $X>0$ 
    \[
    \nabla^2 L(\xi, X) =\begin{pmatrix}
        2(1+\xi^2)^2 X^{-3} & -4\xi (1+\xi^2)X^{-2}  \\
        -4\xi (1+\xi^2)X^{-2} & (4+12\xi^2) X^{-1}
    \end{pmatrix}>0.
    \]
    (In fact, $\det(\nabla^2 L(\xi, X))=8(1+\xi^2)^3X^{-4}>0$). This operator is derived from the one dimensional parabolic equation
    \[
    \frac{u_t}{(1+u_x^2)^{\frac{1}{2}}}=-\left(\frac{u_{xx}}{(1+u_x^2)^{\frac{3}{2}}}\right)^{-1},
    \]
    where the left hand side is the normal velocity of a graph-like curve and the right hand side is the negative reciprocal of its curvature. 
\end{enumerate}
The Pucci operator is also subadditive. The Hamilton-Jacobi operator is subadditive if the function $H$ is.

\section{Superposition property in metric spaces}\label{metric}

It is well-known that comparison with cones can be employed to characterize the viscosity solutions of  
\begin{equation}\label{inf-lap}
-\Delta_\infty u=0 \quad \text{in $\Omega$}
\end{equation}
for a domain $\Omega$ in the Euclidean space; see, for example, \cite{ArCrJu}. Here we recall from \cite{Ju, JuSh} a generalization of this property in general metric spaces. 

\begin{definition}[Comparison with cones]\label{def cpc}
Let $(\X, d_\X)$ be a proper geodesic space, and $\Omega\subsetneq \X$ be a domain.
 A  function $u: \Omega\to \R$ that is locally bounded from above is 
said to 
satisfy comparison with cones from above in $\Omega$ (with respect to the metric $d_\X$) if the following property holds: for any $\hat{x}\in \Omega$, any $c\in \R$, $\kappa\geq 0$ and any bounded open set $\O\subset\subset \Omega$ with $\hat{x}\in\Omega\setminus\O$, the condition 
\begin{equation}\label{comp cone1}
u\leq \phi\quad \text{on $\partial \O$}
\end{equation}
for the cone function $\phi$ given by 
\begin{equation}\label{cone above}
\phi=c+\kappa d_\X(\hat{x}, \cdot)\quad \text{in $\Omega$}
\end{equation}
implies that 
\begin{equation}\label{comp cone2}
u\leq \phi\quad \text{in $\overline{\O}$.}
\end{equation}

We say that any $u: \Omega\to \R$ locally bounded from below satisfies comparison with cones from below in $\Omega$ (with respect to $d$) if $-u$ satisfies comparison from above in $\Omega$ as defined above. In addition, a locally bounded function $u: \Omega\to \R$ is said to satisfy comparison with cones (from both sides) in $\Omega$ (with respect to $d$) if it satisfies comparison with cones both from above and from below in $\Omega$. 
\end{definition}

\begin{theorem}[Superposition for comparison with cones]\label{thm superposition1}
Suppose that $(\X, d_\X)$ and $(\Y, d_{\Y})$ are proper geodesic spaces and that $\U\subset \X$ and $\V\subset \Y$ are two domains. Let $\U\times \V\subset \X\times \Y$ be the product space of $\U$ and $\V$ with metric $d_1$ given by 
\begin{equation}\label{product metric}
d_{1}((x_1, y_1), (x_2, y_2))= d_{\X}(x_1, x_2)+ d_{\Y}(y_1, y_2), \quad (x_1, y_1), (x_2, y_2)\in \X\times \Y.
\end{equation}
If $u\in C(\U)$, $v\in C(\V)$  satisfy comparison with cones from above respectively in $\U$ and in $\V$, then the disjoint sum $w\in C(\U\times \V)$ defined by 
\begin{equation}\label{disjoint sum}
w(x, y)=u(x)+v(y), \quad x\in \U,\  y\in \V    
\end{equation}
satisfies comparison with cones from above in $\U\times \V$ with respect to the metric $d_1$. Similar results for functions that satisfy comparison with cones from below and from both sides hold. 
\end{theorem}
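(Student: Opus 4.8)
The plan is to recast comparison with cones from above as a maximum‑principle statement and then to exploit that, for the $\ell^{1}$ product metric $d_{1}$, the cone function centered at $(\hat x,\hat y)$ decomposes as the sum of a cone in $x$ and a cone in $y$. First I would note that, taking the supremum over the constant $c$ in Definition~\ref{def cpc}, a continuous function $u$ satisfies comparison with cones from above in $\U$ if and only if for every bounded open $\O'\subset\subset\U$, every $\hat x\in\U\setminus\O'$ and every $\kappa\geq0$,
\[
\max_{\overline{\O'}}\big(u-\kappa\,d_{\X}(\hat x,\cdot)\big)=\max_{\partial\O'}\big(u-\kappa\,d_{\X}(\hat x,\cdot)\big),
\]
the maxima being attained since $\overline{\O'}$ is compact ($\X$ proper) and $u$ continuous; the same holds for $v$ on $\V$, and $(\X\times\Y,d_{1})$ is itself a proper geodesic space so that Definition~\ref{def cpc} applies to $w$. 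Fix then a bounded open $\O\subset\subset\U\times\V$ with $(\hat x,\hat y)\notin\O$ and $\kappa\geq0$, and set
\[
\psi(x,y):=w(x,y)-\kappa\,d_{1}\big((\hat x,\hat y),(x,y)\big)=g(x)+h(y),\qquad g:=u-\kappa\,d_{\X}(\hat x,\cdot),\quad h:=v-\kappa\,d_{\Y}(\hat y,\cdot),
\]
where this splitting is exactly the point at which the choice $d_{1}=d_{\X}+d_{\Y}$ is used. It then suffices to show $\max_{\overline{\O}}\psi=\max_{\partial\O}\psi$; arguing by contradiction, I would choose a maximizer $(x^{*},y^{*})\in\O$ of $\psi$ on the compact set $\overline{\O}$ with $M:=\psi(x^{*},y^{*})>\max_{\partial\O}\psi$.

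Next I would introduce the slices $\O^{y_{0}}:=\{x\in\X:(x,y_{0})\in\O\}$ and $\O_{x_{0}}:=\{y\in\Y:(x_{0},y)\in\O\}$, which are bounded open sets with $\overline{\O^{y_{0}}}\subset\subset\U$ and $\overline{\O_{x_{0}}}\subset\subset\V$ by properness, and I would record the elementary facts that a point of $\partial\O^{y_{0}}$ yields a point of $\partial\O$ (and likewise a point of $\overline{\O^{y_{0}}}$ yields a point of $\overline{\O}$), and symmetrically for $\O_{x_{0}}$. Applying these with $y_{0}=y^{*}$: the function $x\mapsto\psi(x,y^{*})=g(x)+h(y^{*})$ is $\leq M$ on $\overline{\O^{y^{*}}}$, equals $M$ at $x^{*}\in\O^{y^{*}}$, and is $<M$ on $\partial\O^{y^{*}}$; hence $x^{*}$ maximizes $g$ over $\overline{\O^{y^{*}}}$ and $\max_{\partial\O^{y^{*}}}g<g(x^{*})$.

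Now I would split into two cases. If $\hat x\notin\O^{y^{*}}$, the reformulated comparison with cones from above for $u$, applied on $\O'=\O^{y^{*}}$ with vertex $\hat x$ and slope $\kappa$, gives $\max_{\overline{\O^{y^{*}}}}g=\max_{\partial\O^{y^{*}}}g<g(x^{*})$, contradicting $g(x^{*})=\max_{\overline{\O^{y^{*}}}}g$. If instead $\hat x\in\O^{y^{*}}$, i.e.\ $(\hat x,y^{*})\in\O$, I would apply the same principle to the punctured slice $W:=\O^{y^{*}}\setminus\{\hat x\}$, which is bounded open and omits $\hat x$; since $\overline{W}=\overline{\O^{y^{*}}}$ and $\partial W=\partial\O^{y^{*}}\cup\{\hat x\}$, this yields
\[
g(x^{*})=\max_{\overline{\O^{y^{*}}}}g=\max\Big\{\max_{\partial\O^{y^{*}}}g,\ g(\hat x)\Big\}=g(\hat x),
\]
since the first alternative is strictly smaller. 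Therefore $\psi(\hat x,y^{*})=g(\hat x)+h(y^{*})=g(x^{*})+h(y^{*})=M$. The complementary slice $\O_{\hat x}$ then contains $y^{*}$ but not $\hat y$ (because $(\hat x,\hat y)\notin\O$), and on $\overline{\O_{\hat x}}$ one has $h=\psi(\hat x,\cdot)-g(\hat x)\leq M-g(\hat x)=h(y^{*})$ with equality at $y^{*}$, while $h<h(y^{*})$ on $\partial\O_{\hat x}$. Applying comparison with cones from above for $v$ on $\O_{\hat x}$ with vertex $\hat y$ and slope $\kappa$ gives $\max_{\overline{\O_{\hat x}}}h=\max_{\partial\O_{\hat x}}h<h(y^{*})\leq\max_{\overline{\O_{\hat x}}}h$, a contradiction. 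This proves the statement for functions satisfying comparison with cones from above; the cases of comparison from below and from both sides follow immediately by applying the above to $-u,-v$ and to both.

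The step I expect to be the main obstacle is the second case: the $x$-slice $\O^{y^{*}}$ of the maximizer may contain the cone vertex $\hat x$, so the one‑variable comparison principle cannot be used on $\O^{y^{*}}$ as it stands. The device that resolves this is the puncturing trick, which turns $\hat x$ into a boundary point of the relevant set and forces $g(x^{*})=g(\hat x)$ — in effect relocating the maximizing $x$-slice onto the vertex — after which the complementary slice $\O_{\hat x}$ never meets the joint vertex $(\hat x,\hat y)$ and the one‑variable principle for $v$ closes the argument. The rest is routine: the topological properties of the slices (openness, relative compactness via properness, and the passage from boundaries and closures of slices to those of $\O$) and the verification that $(\X\times\Y,d_{1})$ is a proper geodesic space so that the conclusion is meaningful for $w$; I do not expect to need the geodesic structure of $\X$ and $\Y$ beyond this.
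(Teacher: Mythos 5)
Your proof is correct, and it follows the same basic strategy as the paper: assume a cone comparison fails for $w$ on some $\O\subset\subset\U\times\V$, locate an interior maximizer $(x^*,y^*)$ of $w$ minus the cone, exploit the additive splitting of the $d_1$-cone to reduce to a one-variable cone comparison on the slice $\{x:(x,y^*)\in\O\}$. The difference is that the paper stops there, asserting without justification that the vertex coordinate $\hat x$ lies outside this slice; that assertion can fail, since $(\hat x,\hat y)\notin\O$ does not prevent $(\hat x,y^*)\in\O$. Your case analysis supplies exactly the missing step: when $\hat x$ does lie in the slice, puncturing the slice at $\hat x$ (legitimate because the definition allows arbitrary bounded open sets and $\hat x$ is not isolated in a geodesic space with more than one point) forces $g(\hat x)=g(x^*)$, which transports the maximum to the complementary slice $\{y:(\hat x,y)\in\O\}$; that slice genuinely omits $\hat y$, and the one-variable comparison for $v$ finishes the argument. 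So your write-up is not merely a rephrasing but closes a gap in the published proof; the only points worth making explicit are the routine slice topology (relative compactness of slices via properness of $\X$ and $\Y$, and that boundary points of a slice give boundary points of $\O$) and the equivalence between Definition~\ref{def cpc} and the maximum-principle reformulation obtained by optimizing over $c$, both of which you correctly flag as standard.
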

\begin{proof}
Denote $\W=\U\times \V$. It is clear that $w\in C(\W)$ with respect to the metric $d_1$. Assume by contradiction that there exist a bounded open set $\O\subset\subset \W$, $\hat{z}=(\hat{x}, \hat{y})\in \W$ and a cone function 
\[
\phi(z)=c+\kappa d_{1}(z, \hat{z})=c+\kappa d_\X(x, \hat{x})+\kappa d_\Y(y, \hat{y}), \quad z=(x, y)\in \W
\]
with $c\in \R$, $\kappa\geq 0$ such that 
\begin{equation}\label{superposition1 eq1}
w\leq \phi\quad \text{on $\partial \O$}
\end{equation}
but $w-\phi$ takes a positive value somewhere in $\O$. Let $z_0=(x_0, y_0)$ (with $x_0\in \U$, $y_0\in \V$) be a maximizer of $w-\phi$ in $\O$. We thus have 
\begin{equation}\label{superposition1 eq2}
w(z_0)-\phi(z_0)>0.
\end{equation}
Take $\O_{\Y}=\{y\in \V: (\hat{x}, y)\in \O\}$ and let
\[
\phi_\Y(y)=\phi(\hat{x}, y)= c+\kappa d_\Y(y, \hat{y}), \quad y\in \V. 
\]
Note that $\phi_\Y$ is a cone function in $\V$. 
It is clear that $\hat{y}\notin \O_\Y$. Since $y\in \partial \O_\Y$ implies that $(\hat{x}, y)\in \partial \O$, by \eqref{superposition1 eq1} we have 
\[
w(\hat{x}, y)=u(\hat{x})+v(y)\leq \phi_\Y(y) \quad \text{for all $y\in \partial \O_\Y$}. 
\]
Applying the comparison with cones to $v$ in $\V$, we obtain $w(\hat{x}, \cdot)\leq \phi_\Y$ in $\O_\Y$. In particular, if $(\hat{x}, y_0)\in \O$, then 
\begin{equation}\label{metric revise1}
w(\hat{x}, y_0)\leq \phi_\Y(y_0)=\phi(\hat{x}, y_0).    
\end{equation}

Now define $\O_{\X}=\{x\in \U: (x,y_0)\in \O\}\setminus \{\hat{x}\}$. It is clear that $\O_\X$ is an open set in $\U$. Also, if $(\hat{x}, y_0)\in \O$, then $\hat{x}\in \partial \O_\X$; otherwise, $\partial \O_\X=\{x\in \U: (x, y_0)\in \partial \O\}$. In either case, by \eqref{superposition1 eq1} and \eqref{metric revise1}, we have
 $u\leq \phi_{\X}$ on $\partial \O_{\X}$, where
\[
\phi_{\X}(x)=c+\kappa d_\X(x, \hat{x})+\kappa d_\Y(y_0, \hat{y})-v(y_0), \quad x\in \U. 
\]
On the other hand, by \eqref{superposition1 eq2} we deduce that $u(x_0)>\phi_\X(x_0)$. Since $\phi_{\X}$ is a cone function in $\U$ and $\hat{x}\notin \O_{\X}$, we have a contradiction to the assumption that $u$ satisfies comparison with cones in $\U$. 
\end{proof}

Note that we here used the $\ell^1$ metric for $\W= \U\times \V$ in the product space $\X\times \Y$. 
It is not clear whether $w$ satisfies comparison with cones with the distance given by the $\ell^2$ metric. 
In Euclidean spaces and in Carnot groups, one can show the equivalence between the comparison with cones for metrics $d_1$ and $d_2$ using the definition of viscosity solutions. However, such a viscosity approach seems unavailable in general metric spaces.

\end{document}